\newtheorem*{theor}{\bf Main Theorem}
\newtheorem{theorem}{\bf Theorem}
\newtheorem{proposition}[theorem]{\bf Proposition}
\newtheorem{lemma}[theorem]{\bf Lemma}
\def\C{{\mathbb C}}
\def\R{{\mathbb R}}
\def\Q{{\mathbb Q}}
\def\p{\mathbb{P}}
\title[Variation of canonical heights]{Variation of canonical heights of subvarieties for polarized endomorphisms}
\author{Thomas Gauthier}
\address{Laboratoire de Math\'ematiques d'Orsay, B\^atiment 307, Universit\'e Paris-Saclay, 91405 Orsay Cedex, France}
\email{thomas.gauthier1@universite-paris-saclay.fr}
\author{Gabriel Vigny}
\address{LAMFA, Universit\'e de Picardie Jules Verne, 33 rue Saint-Leu, 80039 AMIENS Cedex 1, FRANCE}
\email{gabriel.vigny@u-picardie.fr}
\thanks{The first author is partially supported by the Institut Universitaire de France. The second author is partially supported by the ANR QuaSiDy /ANR-21-
	CE40-0016.}
\begin{document}

\begin{abstract}
When an endomorphism $f:X\to X$ of a projective variety which is polarized by an ample line bundle $L$, i.e. such that $f^*L\simeq L^{\otimes d}$ with $d\geq2$, is defined over a number field, Call and Silverman defined a canonical height $\widehat{h}_f$ for $f$.
In a family $(\mathcal{X},\mathcal{f},\mathcal{L})$ parametrized by a curve $S$ together with a section $P:S\to \mathcal{X}$, they show that $\widehat{h}_{f_t}(P(t))/h(t)$ converges to the height $\widehat{h}_{f_\eta}(P_\eta)$ on the generic fiber.

In the present paper, we prove the equivalent statement when studying the variation of canonical heights of subvarieties $Y_t$ varying in a family $\mathcal{Y}$ of any relative dimension. 
\end{abstract}

\maketitle

%
%


\section{Introduction}
A family $(\mathcal{X},\mathcal{f},\mathcal{L})$ of polarized endomorphisms parametrized by a smooth projective curve $S$ over a field $\mathbf{k}$ of characteristic $0$ is a family $\pi:\mathcal{X}\to S$ of projective $k$-varieties which is normal and flat over a Zariski open subset $S^0$ of $S$, a rational map $\mathcal{f}:\mathcal{X}\dashrightarrow\mathcal{X}$ which is regular over $S^0$ and a relatively ample line bundle $\mathcal{L}$ on $\mathcal{X}$, such that for each $t\in S^0$,  if $X_t:=\pi^{-1}\{t\}$ is the fiber of $\pi$ over $t$, $L_t:=\mathcal{L}|_{X_t}$ and $f_t:=\mathcal{f}|_{X_t}$, then $(X_t,f_t,L_t)$ is a polarized endomorphism, i.e. there is an integer $d\geq2$ such that $f_t^*L_t\simeq L_t^{\otimes d}$.
When $S$ and $(\mathcal{X},\mathcal{f},\mathcal{L})$ are defined over a number field $\mathbb{K}$, given a parameter $t\in S^0(\bar{\mathbb{Q}})$, one want to relate the arithmetic complexity of $t$, the dynamical complexity of the corresponding map $f_t$ and the dynamical complexity of the family $\mathcal{f}$. This can be done using the theory of heights.

\medskip

 For a polarized endomorphism $(X,f,L)$ defined over a product formula field $\mathbf{K}$, let $h_{X,L}$ be the standard Weil height function on $X(\bar{\mathbf{K}})$, relative to $L$. Call and Silverman \cite{CS-height} defined the \emph{canonical height} $\widehat{h}_f:X(\bar{\mathbf{K}})\to\R_+$ of the endomorphism $f$ as
 \[\widehat{h}_f=\lim_{n\to\infty}\frac{1}{d^n}h_{X,L}\circ f^n.\]
 Assume that $X$ is defined over the function field of characteristic zero  $\mathbf{K}:=\mathbb{K}(S)$ where $\mathbb{K}$ is a number field  and $S$ is a smooth projective $\mathbb{K}$-curve. To the polarized endomorphism $(X,f,L)$ we associate a \emph{model} $(\mathcal{X},\mathcal{f},\mathcal{L})$ over $S$, i.e. a family of polarized endomorphisms $(\mathcal{X},\mathcal{f},\mathcal{L})$ parametrized by $S$ such that, if $\eta$ is the generic point of $S$, then $(X,f,L)$ is isomorphic to $(X_\eta,f_\eta,L_\eta)$ where $X_\eta$ is the generic fiber of $\pi:\mathcal{X}\to S$, $f_\eta:=\mathcal{f}|_{X_\eta}$ and $L_\eta:=\mathcal{L}|_{X_\eta}$.

\medskip

 Endow $S$ with an ample $\mathbb{Q}$-line bundle and take $P\in X(\mathbf{K})$, $P$ can be thought of as a function $S\to \mathcal{X}$. In that setting,  we have the canonical height $\widehat{h}_{f_\eta}(P_\eta)$ which describes the arithmetic complexity of the orbit $\mathrm{Orb}_{f_\eta}(P)=(f_\eta^n(P_\eta))_n$ over $\mathbb{K}(S)$ and, given a parameter $t \in \bar{\mathbb{Q}}$, the naive height $h_S(t)$ which describes the arithmetic complexity of $t$, and the canonical height $\widehat{h}_{f_t}(P(t))$ which describes the arithmetic complexity of the orbit $\mathrm{Orb}_{f_t}(P(t))=(f_t^n(P(t))_n$ over $\bar{\mathbb{Q}}$. In that setting, Call and Silverman~\cite[Theorem~4.1]{CS-height} proved : 
 \begin{align}
\lim_{h_S(t)\to\infty \atop t\in S^0(\bar{\mathbb{Q}})}\frac{\widehat{h}_{f_t}(P(t))}{h_S(t)}=\widehat{h}_{f_\eta}(P_\eta).\label{ineg:CS-local2}
 \end{align}
 In the particular case where $X=\p^1(\mathbf{K})$ and $f$ is a polynomial map, Ingram \cite{Ingram_variation} improved \eqref{ineg:CS-local2} by showing there is an effective $\mathbb{Q}$-divisor $D(f,P)$ on $S$ of degree $\widehat{h}_{f_\eta}(P_\eta)$ such that $\widehat{h}_{f_t}(P(t)) = h_{D(f,P)}(t)+O_{f,P,S}(1)$ (see also Tate \cite{Tate_elliptic} for the case of families of elliptic curves) and finally the first author and Favre showed in \cite{MR3864204} that the height function $\widehat{h}_{f_t}(P(t))$ is induced by a continuous adelic metrization of the line bundle $\mathcal{O}(D(f,P))$. Very recently, Ingram also improved \eqref{ineg:CS-local2} in \cite{ingram2021variation} saving a power in the error term. 
 
 \medskip

 Nevertheless, when the relative dimension of $\mathcal{X}$ is at least $2$, it can be useful to consider the canonical height of fibers of a subvariety $\mathcal{Y}\subsetneq \mathcal{X}$ with $\pi(\mathcal{Y})=S$ of positive relative dimension.  Indeed, generalizing the $1$-dimensional theory \cite{MSS, Lyubich-unstable}, Berteloot, Bianchi and Dupont \cite{BBD} showed that bifurcations in a complex family of endomorphisms of the projective space $\mathbb{P}^k$ are caused by the unstability of the critical set (which has codimension $1$), and the authors of op. cit., following DeMarco~\cite{DeMarco1} in dimension 1, defined a \emph{bifurcation current} which gives a measurable meaning to bifurcations. The authors showed in \cite{GV_Northcott} that, in the case of an algebraic family of endomorphisms of the projective space $\mathbb{P}^k$, the mass of this current is actually the canonical height of the critical divisor.   
 
 ~
 
Here is the main result of this article.

\begin{theor}\label{tm:principal1}
Let $(\mathcal{X},\mathcal{f},\mathcal{L})$ be a family of polarized endomorphisms over $S$ and let $\mathcal{Y}\subsetneq\mathcal{X}$ be an irreducible subvariety such that $\pi(\mathcal{Y})=S$, all defined over a number field $\mathbb{K}$. For any $\mathbb{Q}$-ample height $h_S$ on $S$ of degree $1$, we have
\[\lim_{h_S(t)\to\infty \atop t\in S_\mathcal{Y}^0(\bar{\mathbb{Q}})}\frac{\widehat{h}_{f_t}(Y_t)}{h_S(t)}=\widehat{h}_{f_\eta}(Y_\eta),\]
where $S^0_\mathcal{Y}$ be the maximal Zariski open subset of $S^0$ over which $\pi|_\mathcal{Y}$ is flat and projective.
\end{theor}

Ingram~\cite{Ingram-variation} proved this result when $\mathcal{Y}=\mathrm{Crit}(f)$ is the critical locus of the family $f$ using a different description of the height of a divisor and explicit estimates local.

As an application, observe that if $\widehat{h}_{f_\eta}(Y_\eta)\neq 0$, then, for any integer $n$, the set of parameters $ t\in S^0(\mathbb{K})$ where $\mathbb{K}$ is an algebraic extension of $\Q$ with $[\mathbb{K}:\Q]\leq n$ is finite by the Northcott property. Note that the preperiodicity of $Y_t$ implies  
$\widehat{h}_{f_t}(Y_t)=0$  (see e.g.~\cite{zhang}). Recall that an endomorphism $f_t$ of $\mathbb{P}^k$ is post-critically finite (PCF for short) if the critical set is preperiodic, i.e. if there are integers $n>m\geq0$ such that $f^{n}_t(\mathrm{Crit}(f_t))\subset f_t^m(\mathrm{Crit}(f_t))$. Theorem~\ref{tm:principal1} shows that, when $\mathcal{Y}=\mathrm{Crit}(f)$ is the critical set of a family $f$ of endomorphisms of $\mathbb{P}^k$ with $\widehat{h}_{f_\eta}(Y_\eta)\neq 0$ (which means the family is unstable), there are only finitely many post-critically finite (PCF for short) maps on a given extension of $\Q$. 

%

~

Heights can be seen in two different and entangled fashions: by working at all places which can often gives precise estimates and by the mean of arithmetic or algebraic intersection theory which is more intrinsic and allows cohomological arguments. The philosophy of this article is to rely as much as possible on the latter. Our first contribution is a comparison of the naive height and the canonical height in families directly using \cite{CS-height} for sections and using intersection theory for subvarieties of positive relative dimension (see Proposition~\ref{prop:CS-higher}). In a second time, using the exposition \cite{YZ-adelic} of Yuan and Zhang of the \emph{Deligne pairing} \cite{MR902592} of metrized line bundles we deduce the Main Theorem from Proposition~\ref{prop:CS-higher} and from the quasi-equivalence of ample heights on curves.

\subsection*{Acknowledgments}
We would like to thank S\'ebastien Boucksom and Charles Favre for many useful discussions about Deligne pairings.

\section{The canonical height over a number field}\label{sec:canonicalheight}

\subsection{Adelic metrics and their height functions}
Let $X$ be a projective variety of dimension $k$, and let $L_0,\ldots,L_k$ be $\mathbb{Q}$-line bundles on $X$, all defined over a number field $\mathbb{K}$.
Assume $L_i$ is equipped with an adelic continuous metric $\{\|\cdot\|_{v,i}\}_{v\in M_\mathbb{K}}$ and denote $\bar{L}_i:=(L_i,\{\|\cdot\|_v\}_{v\in M_\mathbb{K}})$. Assume $\bar{L}_i$ is  semi-positive for $1\leq i\leq k$. 
 Fix a place $v\in M_\mathbb{K}$. Denote by $X_v^\mathrm{an}$ the Berkovich analytification of $X$ at the place $v$. We also let $c_1(\bar{L}_i)_v$ be the curvature form of the metric $\|\cdot\|_{v,i}$ on $L_v^\mathrm{an}$.

For any closed subvariety $Y$ of dimension $q$ of $X$, the arithmetic intersection number $\left(\bar{L}_0\cdots\bar{L}_q|Y\right)$ is symmetric and multilinear with respect to the $L_i$'s. As observed by Chambert-Loir~\cite{ACL}, we can define $\left(\bar{L}_0\cdots\bar{L}_q|Y\right)$ inductively by
\[\left(\bar{L}_0\cdots\bar{L}_q|Y\right)=\left(\bar{L}_1\cdots\bar{L}_q|\mathrm{div}(s)\cap Y\right)+\sum_{v\in M_\mathbb{K}}n_v\int_{Y_v^{\mathrm{an}}}\log\|s\|^{-1}_v \bigwedge_{j=1}^qc_1(\bar{L}_i)_v,\]
for any global section $s\in H^0(X,L_0)$ such that the intersection $\mathrm{div}(s)\cap Y$ is proper. In particular, if $L_0$ is the trivial bundle and $\|\cdot\|_{v,0}$ is the trivial metric at all places but $v_0$, this gives
\[\left(\bar{L}_0\cdots\bar{L}_q|Y\right)=n_{v_0}\int_{Y_{v_0}^{\mathrm{an}}}\log\|s\|^{-1}_{v_0,0} \bigwedge_{j=1}^qc_1(\bar{L}_i)_{v_0}.\]
When $\bar{L}$ is a big and nef $\mathbb{Q}$-line bundle endowed with a semi-positive continuous adelic metric, following Zhang~\cite{Zhang-positivity}, we can define $h_{\bar{L}}(Y)$ as
\[ h_{\bar{L}}(Y):=\frac{\left(\bar{L}^{q+1}|{Y}\right)}{(q+1)[\mathbb{K}:\mathbb{Q}]\deg_{Y}(L)},\]
where $\deg_Y(L)=(L_{|Y})^q$ is the volume of the line bundle $L$ restricted to $Y$.
\subsection{Canonical height over a number field}
Let $X$ be a projective variety of dimension $k$, let $f:X\to X$ be a morphism and let $L$ be an ample line bundle on $X$, all defined over a number field $\mathbb{K}$. Recall that we say $(X,f,L)$ is a \emph{polarized endomorphism} of degree $d>1$ if $f^*L\simeq L^{\otimes d}$, i.e. $f^*L$ is linearly equivalent to $L^{\otimes d}$. 

It is known that polarized endomorphisms defined over the field $\mathbb{K}$ admit a \emph{canonical metric}. This is an adelic semi-positive continuous metric on $L$, which can be built as follows: let $\mathscr{X}\to\mathrm{Spec}(\mathscr{O}_\mathbb{K})$ be an $\mathscr{O}_\mathbb{K}$-model of $X$ and $\bar{\mathscr{L}}$ be a model of $L$ endowed with a model metric, for example $\bar{\mathscr{L}}=\iota^*\bar{\mathcal{O}}_{\mathbb{P}^N}(1)$, where $\iota: X\hookrightarrow \mathbb{P}^N$ is an embedding inducing $L$ and $\mathcal{O}_{\mathbb{P}^N}(1)$ is endowed with its naive metrization.
We then define $\bar{L}_f$ as
\[\bar{L}_f:=\lim_{n\to\infty}\frac{1}{d^n}(f^n)^*\bar{\mathscr{L}}|_\mathbb{K}.\]
This metrization induces the \emph{canonical height} $\widehat{h}_f$ of $f$: for any closed point $x\in X(\bar{\mathbb{Q}})$ and any section $\sigma\in H^0(X,L)$ which does not vanish at $x$, we let
\[\widehat{h}_{f}(x):=\frac{1}{[\mathbb{K}:\mathbb{Q}]\deg(x)}\sum_{v\in M_\mathbb{K}}\sum_{y\in \mathsf{O}(x)}n_v\log\|\sigma(y)\|_v^{-1},\]
where $x\in X(\mathbb{K})$ , $\mathsf{O}(x)$ is the Galois orbit of $x$ in $X$. The function $\widehat{h}_f:X(\bar{\mathbb{Q}})\to \mathbb{R}$ satisfies $\widehat{h}_f\circ f=d\cdot \widehat{h}_f$, $\widehat{h}_f\geq0$ and $\widehat{h}_f(x)=0$ if and only if $x$ is preperiodic under iteration of $f$, i.e. if there is $n>m\geq0$ such that $f^n(x)=f^m(x)$. Note that $\widehat{h}_f$ can also be defined as
\[\widehat{h}_f(x)=\lim_{n\to\infty}\frac{1}{d^n}h_{X,L}(f^n(x)),\]
where $h_{X,L}$ is any Weil height function on $X$ associated with the ample line bundle $L$.

\section{The canonical height over a function field of characteristic zero}

We now focus on the dynamical setting: let $\pi:\mathcal{X}\to S$ be a family of complex projective varieties, where $S$ is a smooth complex projective curve, and let $\mathcal{L}$ be a nef and relatively ample line bundle on $\mathcal{X}$. We let $f:\mathcal{X}\dashrightarrow\mathcal{X}$ be a rational map such that $(\mathcal{X},f,\mathcal{L})$ is a \emph{family of polarized endomorphisms} of degree $d\geq2$, with regular part $S^0$, i.e. for all $t\in S^0(\C)$, $X_t:=\pi^{-1}\{t\}$ is smooth, $L_t:=\mathcal{L}|_{X_t}$ is ample and $f_t^*L_t\simeq L_t^{\otimes d}$. 

Let $\mathcal{Y}\subsetneq\mathcal{X}$ be a proper subvariety of $\mathcal{X}$ of dimension $q+1$ with $\pi(\mathcal{Y})=S$. Let $S^0_\mathcal{Y}$ be the maximal Zariski open subset of $S^0$ such that the restriction $\pi|_{\mathcal{Y}}:\mathcal{Y}\to S$ of $\pi$ is flat over $S^0$. We denote by $\mathcal{Y}^0$ and $\mathcal{X}^0$ the \emph{regular parts} $\mathcal{Y}^0:=\pi|_{\mathcal{Y}}^{-1}(S^0_\mathcal{Y})$ and $\mathcal{X}^0 :=\pi^{-1}(S^0_\mathcal{Y})$.

Let $\omega$ be a smooth positive form representing the first Chern class $c_1(\mathcal{L})$ on $\mathcal{X}$. As $f^*\mathcal{L}\simeq \mathcal{L}^{\otimes d}$ on $\mathcal{X}^0$, there is a smooth function $g:\mathcal{X}^0\to\mathbb{R}$ such that $d^{-1}f^*\omega=\omega+dd^cg$ as forms on $\mathcal{X}^0$. In particular, the following limit exists as a closed positive $(1,1)$-current on $\mathcal{X}^0$
\[ \widehat{T}_f := \lim_{n\to\infty} \frac{1}{d^n} (f^n)^*(\omega),\]
and can be written as $\widehat{T}_f=\omega+dd^cg_f$, where $g_f:= \sum_{n=0}^\infty d^{-n} g\circ f^n$ is continuous on $\mathcal{X}^0$. The 
current $\widehat{T}_f$ is the \emph{fibered Green current} of $f$.

\medskip


Let $Y_\eta$ be the generic fiber of a family $\mathcal{Y}\to S$ of subvarieties of relative dimension $q$ of $\mathcal{X}\to S$, and let $\phi_n:\mathcal{X}_n\to\mathcal{X}$ be a birational morphism such that $f^n\circ \phi_n$ extends as a morphism $F_n:\mathcal{X}_n\to\mathcal{X}$. We define
\[\widehat{h}_{f_\eta}(Y_\eta):=\lim_{n\to\infty}d^{-n(q+1)}\frac{\left((F_n)_*\phi_n^*\{\mathcal{Y}\}\cdot c_1(\mathcal{L})^{q+1}\right)}{(q+1)\deg_{Y_\eta}(L_\eta)}.\]

The next lemma follows from~\cite{GV_Northcott}:
\begin{lemma}\label{lm:GV}
For any $\mathcal{Y}$ as above, $\widehat{h}_{f_\eta}(Y_\eta)$ is well-defined and satisfies $\widehat{h}_{f_\eta}((f_\eta)_*(Y_\eta))=d\widehat{h}_{f_\eta}(Y_\eta)$. In addition, we can compute $\widehat{h}_{f_\eta}(Y_\eta)$ as
\[\widehat{h}_{f_\eta}(Y_\eta)=\frac{1}{(q+1)\deg_{Y_\eta}(L_\eta)}\int_{\mathcal{X}^0(\C)}\widehat{T}^{q+1}_f\wedge[\mathcal{Y}].\]
\end{lemma}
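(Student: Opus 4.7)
The plan is to establish the three assertions simultaneously by first converting the algebraic intersection number into an analytic integral on the complex manifold $\mathcal{X}^0(\C)$, then passing to the limit using the convergence $d^{-n}(f^n)^*\omega\to\widehat{T}_f$, and finally reading off the dynamical scaling from the functional equation $f^*\widehat{T}_f=d\widehat{T}_f$.

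First, fix a smooth positive representative $\omega$ of $c_1(\mathcal{L})$ and, for each birational model $\phi_n:\mathcal{X}_n\to\mathcal{X}$ on which $F_n=f^n\circ\phi_n$ is a morphism, apply the projection formula to rewrite the intersection number as an integral:
\[\left((F_n)_*\phi_n^*\{\mathcal{Y}\}\cdot c_1(\mathcal{L})^{q+1}\right)=\int_{\mathcal{X}_n(\C)}\phi_n^*[\mathcal{Y}]\wedge (F_n^*\omega)^{q+1}=\int_{\mathcal{X}(\C)}[\mathcal{Y}]\wedge ((f^n)^*\omega)^{q+1},\]
where the second equality uses $(\phi_n)_*\phi_n^*[\mathcal{Y}]=[\mathcal{Y}]$ (since $\phi_n$ is birational) and the fact that $F_n=f^n\circ\phi_n$ over the preimage of $\mathcal{X}^0$, so that the two smooth forms $F_n^*\omega$ and $\phi_n^*(f^n)^*\omega$ agree there. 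Contributions from the exceptional locus vanish because $\phi_n^*[\mathcal{Y}]$ is supported on the strict transform of $\mathcal{Y}$, which meets $\mathcal{X}^0$ in a dense open set by the flatness of $\pi|_{\mathcal{Y}}$ over $S^0_\mathcal{Y}$.

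Next, divide by $d^{n(q+1)}$ and pass to the limit. By \cite{GV_Northcott}, the sequence $d^{-n}(f^n)^*\omega=\omega+dd^c g_n$ converges to the fibered Green current $\widehat{T}_f=\omega+dd^c g_f$ with $g_n\to g_f$ locally uniformly on $\mathcal{X}^0(\C)$. Since the potentials are continuous on $\mathcal{X}^0(\C)$, the Bedford--Taylor-type wedge products $(d^{-n}(f^n)^*\omega)^{q+1}\wedge[\mathcal{Y}]$ are well-defined positive measures on $\mathcal{X}^0(\C)$ and converge weakly to $\widehat{T}_f^{q+1}\wedge[\mathcal{Y}]$. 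Integrating gives at once the existence of the limit, its independence of the birational model $(\mathcal{X}_n,\phi_n,F_n)$, and the integral formula
\[\widehat{h}_{f_\eta}(Y_\eta)=\frac{1}{(q+1)\deg_{Y_\eta}(L_\eta)}\int_{\mathcal{X}^0(\C)}\widehat{T}_f^{q+1}\wedge[\mathcal{Y}].\]

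Finally, the dynamical invariance follows from this formula combined with the projection formula for currents and $f^*\widehat{T}_f=d\widehat{T}_f$:
\[\int_{\mathcal{X}^0(\C)}\widehat{T}_f^{q+1}\wedge[f_*\mathcal{Y}]=\int_{\mathcal{X}^0(\C)}(f^*\widehat{T}_f)^{q+1}\wedge[\mathcal{Y}]=d^{q+1}\int_{\mathcal{X}^0(\C)}\widehat{T}_f^{q+1}\wedge[\mathcal{Y}],\]
while $\deg_{(f_\eta)_*Y_\eta}(L_\eta)=d^q\deg_{Y_\eta}(L_\eta)$ because $f_\eta^*L_\eta\simeq L_\eta^{\otimes d}$. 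The ratio gives the factor $d$.

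The main obstacle is the limiting step: one has to justify that the measures $(d^{-n}(f^n)^*\omega)^{q+1}\wedge[\mathcal{Y}]$ converge as measures on the \emph{non-compact} open set $\mathcal{X}^0(\C)$, without mass escape to the boundary $\mathcal{X}(\C)\setminus\mathcal{X}^0(\C)$. This is precisely the content of \cite{GV_Northcott}, where the continuity of the fibered Green potential $g_f$ on $\mathcal{X}^0$ and the tameness of $\mathcal{Y}$ over $S^0_\mathcal{Y}$ are exploited; once granted, the rest is a formal computation.
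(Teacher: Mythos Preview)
Your proof is correct and follows essentially the same approach as the paper: both defer the well-definedness and the integral formula to \cite{GV_Northcott} (you unpack the conversion from intersection numbers to integrals and the limiting step more explicitly, but the substantive input---convergence without mass escape on $\mathcal{X}^0(\C)$---is the same cited result), and both derive the invariance $\widehat{h}_{f_\eta}((f_\eta)_*Y_\eta)=d\,\widehat{h}_{f_\eta}(Y_\eta)$ by the identical computation using $f^*\widehat{T}_f=d\widehat{T}_f$ and the projection formula.
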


\begin{proof}
The fact that it is well-defined and the formula relating the limit of $d^{-n(q+1)}\left((f^n)_*\{\mathcal{Y}\}\cdot c_1(\mathcal{L})^{q+1}\right)$ with $\widehat{T}_f^{q+1}\wedge\left(f_*[\mathcal{Y}]\right)$ are contained in \cite[Theorem~B]{GV_Northcott}. We then can compute
\begin{align*}
\widehat{h}_{f_\eta}((f_\eta)_*(Y_\eta)) & =\frac{1}{(q+1)\deg_{Y_\eta}(f_\eta^*L_\eta)}\int_{\mathcal{X}^0(\C)}\widehat{T}_f^{q+1}\wedge\left(f_*[\mathcal{Y}]\right)\\
& = \frac{1}{(q+1)d^q\deg_{Y_\eta}(L_\eta)}\int_{\mathcal{X}^0(\C)}\left(f^*\widehat{T}_f^{q+1}\right)\wedge [\mathcal{Y}]\\
& = \frac{d^{q+1}}{(q+1)d^q\deg_{Y_\eta}(L_\eta)}\int_{\mathcal{X}^0(\C)}\widehat{T}_f^{q+1}\wedge [\mathcal{Y}]=d\widehat{h}_{f_\eta}(Y_\eta),
\end{align*}
where we used that $f^*(\widehat{T}_f)= d  \widehat{T}_f$, $\dim Y_\eta=q$, and $\dim \mathcal{Y}=q+1$.
\end{proof}

 In particular, the last part of the lemma states that the height $\widehat{h}_{f_\eta}(Y_\eta)$ is $>0$ if and only if the measure $\widehat{T}^{q+1}_f\wedge[\mathcal{Y}]$  is not identically zero on $\mathcal{X}^0(\C)$.

 Let $\pi_n:= \pi \circ \phi_n :\mathcal{X}_n\to S$. Relying on estimates from \cite{GV_Northcott} we can deduce

\begin{lemma}\label{cor:GV}
There is a constant $C\geq1$ depending only on $(\mathcal{X},f,\mathcal{L})$ and $\mathcal{Y}$ such that for any ample $\mathbb{Q}$-line bundle $\mathcal{M}$ on $S$ of degree $1$ and any $n\geq1$, we have
\[\left|\frac{\left(\phi_n^*\{\mathcal{Y}\}\cdot (F_n)^*c_1(\mathcal{L})^{q+1}\right)}{(q+1)\left(\phi_n^*\{\mathcal{Y}\}\cdot (F_n)^*c_1(\mathcal{L})^{q}\cdot c_1(\pi_n^*\mathcal{M})\right)}-d^n\widehat{h}_{f_\eta}(Y_\eta)\right|\leq C.\]
\end{lemma}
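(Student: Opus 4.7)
The plan is to compute the denominator exactly and reduce the claim to a quantitative refinement, with rate $O(d^{-n})$, of the convergence in Lemma~\ref{lm:GV}; this refined convergence will then be established by a Tate-style telescoping. For the denominator, observe that since $f$ is a family endomorphism $\pi\circ f=\pi$, whence $\pi\circ F_n=\pi\circ f^n\circ\phi_n=\pi\circ\phi_n=\pi_n$ and thus $\pi_n^*\mathcal{M}=F_n^*\pi^*\mathcal{M}$. The projection formula with $F_n$ gives
\[(\phi_n^*\{\mathcal{Y}\}\cdot F_n^*c_1(\mathcal{L})^q\cdot c_1(\pi_n^*\mathcal{M}))=((F_n)_*\phi_n^*\{\mathcal{Y}\}\cdot c_1(\mathcal{L})^q\cdot c_1(\pi^*\mathcal{M})).\]
The $1$-cycle $(F_n)_*\phi_n^*\{\mathcal{Y}\}\cdot c_1(\mathcal{L})^q$ pushes under $\pi$ to $d^{nq}\deg_{Y_\eta}(L_\eta)\cdot[S]$ because over a general $t\in S^0_\mathcal{Y}$ the fiber degree is $(f_t^n)_*[Y_t]\cdot c_1(L_t)^q=d^{nq}\deg_{Y_\eta}(L_\eta)$, by projection formula on $X_t$ and $f_t^*L_t\simeq L_t^{\otimes d}$. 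Since $\deg_S\mathcal{M}=1$, the denominator equals $(q+1)d^{nq}\deg_{Y_\eta}(L_\eta)$, and writing $a_n:=((F_n)_*\phi_n^*\{\mathcal{Y}\}\cdot c_1(\mathcal{L})^{q+1})$ the claim reduces to
\[\left|\frac{a_n}{d^{n(q+1)}(q+1)\deg_{Y_\eta}(L_\eta)}-\widehat{h}_{f_\eta}(Y_\eta)\right|\leq\frac{C}{d^n}.\]

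To prove this rate I would use Tate telescoping. Choose the models inductively with $\phi_{n+1}=\phi_n\circ\psi_{n+1}$ where $\psi_{n+1}:\mathcal{X}_{n+1}\to\mathcal{X}_n$ is birational and resolves the indeterminacy of $f\circ F_n$, forcing $(F_{n+1})_*\phi_{n+1}^*\{\mathcal{Y}\}=f_*(F_n)_*\phi_n^*\{\mathcal{Y}\}$; the projection formula with $f$ then yields
\[a_{n+1}-d^{q+1}a_n=((F_n)_*\phi_n^*\{\mathcal{Y}\}\cdot(f^*c_1(\mathcal{L})^{q+1}-d^{q+1}c_1(\mathcal{L})^{q+1})).\]
Since $f_t^*L_t\simeq L_t^{\otimes d}$, the class $D:=f^*c_1(\mathcal{L})-d\,c_1(\mathcal{L})$ is trivial on the generic fiber, so decomposes as $D=\pi^*c_1(\mathcal{N})+E$ with $E$ supported over finitely many bad fibers. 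Expanding $(d\,c_1(\mathcal{L})+D)^{q+1}-d^{q+1}c_1(\mathcal{L})^{q+1}$ and using $(\pi^*c_1(\mathcal{N}))^2=0$ on the curve $S$, each surviving monomial contributes $O(d^{nq})$ when paired with $(F_n)_*\phi_n^*\{\mathcal{Y}\}$: the $\pi^*c_1(\mathcal{N})$-part reduces to the denominator computation above with $\mathcal{N}$ in place of $\mathcal{M}$, while the $E$-parts localize over bad fibers and are bounded by the $L_t$-volume of $(F_n)_*\phi_n^*\{\mathcal{Y}\}|_{X_t}$, itself $O(d^{nq})$. This gives $|a_{n+1}-d^{q+1}a_n|\leq C'd^{nq}$, and summing
\[\sum_{m\geq n}\left|\frac{a_{m+1}}{d^{(m+1)(q+1)}}-\frac{a_m}{d^{m(q+1)}}\right|\leq\sum_{m\geq n}\frac{C'}{d^{m+q+1}}=O(d^{-n})\]
together with the convergence from Lemma~\ref{lm:GV} gives the required estimate.

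The main obstacle lies in the telescoping step: making the pushforward identity $(F_{n+1})_*\phi_{n+1}^*\{\mathcal{Y}\}=f_*(F_n)_*\phi_n^*\{\mathcal{Y}\}$ fully rigorous when $f$ is only rational, coherently choosing the birational tower $\{\phi_n\}_n$ so that these Chow-theoretic identities hold on an appropriate model, and above all bounding the contribution of the vertical correction $E$ over bad fibers so that it remains $O(d^{nq})$ rather than escalating. Once these algebro-geometric bookkeepings are settled the telescoping closes routinely.
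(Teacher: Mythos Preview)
Your computation of the denominator is correct and matches the paper's: both arrive at $(q+1)d^{nq}\deg_{Y_\eta}(L_\eta)$, you via the projection formula for $F_n$ and pushforward to $S$, the paper via the integral $\int_{S^0_\mathcal{Y}(\C)}\left(\int_{Y_t}((f_t^n)^*\omega_t)^q\right)\alpha$ with a smooth representative $\alpha$ of $c_1(\mathcal{M})$. These are the same calculation in different languages.

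For the numerator rate the paper takes a different route from yours. It simply invokes Proposition~3.5 and Theorem~B of \cite{GV_Northcott}, which already give
\[d^{-n(q+1)}\left(\phi_n^*\{\mathcal{Y}\}\cdot (F_n)^*c_1(\mathcal{L})^{q+1}\right)=\int_{\mathcal{X}^0(\C)}\widehat{T}_f^{q+1}\wedge[\mathcal{Y}]+O(d^{-n}),\]
and combines this with the denominator identity. The $O(d^{-n})$ there comes from analytic estimates on the Green potential $g_f=\sum d^{-n}g\circ f^n$, carried out entirely on the regular part $\mathcal{X}^0(\C)$ where $f$ is a genuine morphism; bad fibers never enter.

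Your Tate telescoping is a genuinely different, purely cohomological argument. What it buys is self-containment: no appeal to currents or to the external reference. What it costs is exactly the obstacle you flag. Once you expand $(d\,c_1(\mathcal{L})+D)^{q+1}$ and substitute $D=\pi^*c_1(\mathcal{N})+E$, the terms with a single $\pi^*c_1(\mathcal{N})$ or a single $E$ do reduce to fiber degrees and are $O(d^{nq})$ as you say, because the full fiber class $[X_t]=\pi^*[\mathrm{pt}]$ is independent of $t$. But the terms involving $E^j$ with $j\geq 2$ do not: $E$ is a $\mathbb{Q}$-combination of irreducible components of reducible bad fibers, its self-intersections can be negative, and pairing these against $(F_n)_*\phi_n^*\{\mathcal{Y}\}\cdot c_1(\mathcal{L})^{q+1-j}$ does not obviously localize to a quantity controlled by $d^{nq}\deg_{Y_\eta}(L_\eta)$. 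One way forward is to pass to a semistable or equidimensional model so that each bad fiber is a positive combination of components numerically proportional to the full fiber, but this is additional work. The paper's analytic approach avoids the issue entirely by never leaving $\mathcal{X}^0$.
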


\begin{proof}
Combining Proposition~3.5 and Theorem~B from \cite{GV_Northcott} we have
\[d^{-n(q+1)}\left(\phi_n^*\{\mathcal{Y}\}\cdot (F_n)^*c_1(\mathcal{L})^{q+1}\right)=\int_{\mathcal{X}^0(\C)}\widehat{T}^{q+1}_f\wedge[\mathcal{Y}]+O\left(\frac{1}{d^n}\right).\]
Let now $\alpha$ be a smooth form on $S(\C)$ which represents $c_1(\mathcal{M})$ (it has mass $1=\deg_S(\mathcal{M}) $) and $\omega$ be a smooth form on $\mathcal{X}(\C)$ which represents $c_1(\mathcal{L})$. By definition, we have
\begin{align*}
\left(\phi_n^*\{\mathcal{Y}\}\cdot (F_n)^*c_1(\mathcal{L})^{q}\cdot c_1(\pi_n^*\mathcal{M})\right) & =\int_{\mathcal{X}^0(\C)}\left((f^n)^*\omega\right)^q\wedge[\mathcal{Y}]\wedge\pi^*(\alpha)\\
& = \int_{S^0_\mathcal{Y}(\C)}\pi_*\left(\left((f^n)^*\omega\right)^q\wedge[\mathcal{Y}]\right)\wedge \alpha\\
&= \int_{S^0_\mathcal{Y}(\C)}\left(\int_{Y_t}\left((f_t^n)^*\omega_t\right)^q\right)\wedge \alpha\\
&= d^{qn}\int_{S^0_\mathcal{Y}(\C)}\left(\deg_{Y_t}(L_t)\right)\alpha=d^{qn}\deg_{Y_\eta}(L_\eta),
\end{align*}
where we used that $\dim Y_t=q$, $\dim \mathcal{Y}^0=q+1$ and that $\left((f^n)^*\omega\right)^q\wedge[\mathcal{Y}]$ has bidegree $(q,q)$ on $\mathcal{Y}^0(\C)$ so that $\pi_*\left(\left((f^n)^*\omega\right)^q\wedge[\mathcal{Y}]\right)$ has bidegree $(0,0)$ on $S^0_\mathcal{Y}(\C)$, i.e. is a function, since the fibers of $\pi$ have dimension $q$.
\end{proof}

\section{Comparing the canonical and the naive heights in families}

As above, let $(\mathcal{X},f,\mathcal{L})$ be a family of polarized endomorphisms of degree $d\geq2$ defined over $\mathbb{K}$, with regular part $S^0$. We endow $\mathcal{L}$ with a semi-positive adelic continuous metrization $\bar{\mathcal{L}}$. We let $\mathcal{Y}\subsetneq\mathcal{X}$ be a subvariety defined over $\mathbb{K}$ and such that $\pi(\mathcal{Y})=S$, and let $S^0_\mathcal{Y}$ be the maximal Zariski open subset of $S^0$  such that $\pi|_{\mathcal{Y}}$ is flat over $S^0_\mathcal{Y}$. We also endow $S$ with an ample divisor $H$ of degree $1$.


We prove here the following higher dimensional counterpart to Call and Silverman's pointwise estimate~\cite[Theorem~3.1]{CS-height}, see~\cite[Theorem~1]{Ingram-variation} for the case of hypersurfaces of $\mathbb{P}^k$
\begin{proposition}\label{prop:CS-higher}
There exists a constant $C\geq1$ depending only on the family $(\mathcal{X},f,\mathcal{L})$ and the heights $h_{\bar{\mathcal{L}}}$ and $h_{S,H}$ such that for any subvariety $\mathcal{Y}\subsetneq \mathcal{X}$ such that $\left(\mathcal{X},f,\mathcal{L},\mathcal{Y}\right)$ is a dynamical pair with regular part $S^0_\mathcal{Y}$ and for any $t\in S^0_\mathcal{Y}(\bar{\mathbb{Q}})$ we have
\[\left|h_{\bar{\mathcal{L}}}(Y_t)-\widehat{h}_{f_t}(Y_t)\right|\leq C\left(h_{S,H}(t)+1\right).\]
\end{proposition}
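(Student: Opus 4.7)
The plan is a telescoping argument driven by a per-step height comparison, handled by arithmetic intersection theory on $\mathcal{X}$. The case of relative dimension $0$ (multisections, i.e.\ $Y_\eta$ a finite set of points) reduces directly to the Call--Silverman estimate \cite[Theorem~3.1]{CS-height} applied uniformly across the family, so I assume $q:=\dim Y_t\geq 1$ throughout, and use intersection theory as the authors suggest.

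The core claim is the per-step estimate
\[\bigl|h_{\bar{\mathcal{L}}}(f_t(Z_t))-d\cdot h_{\bar{\mathcal{L}}}(Z_t)\bigr|\leq C_0(h_{S,H}(t)+1),\]
uniform in $t\in S^0_\mathcal{Y}(\bar{\mathbb{Q}})$ and in a pure-dimensional subvariety $Z_t\subset X_t$ of dimension $q$. The polarization isomorphism $f_t^*L_t\simeq L_t^{\otimes d}$ globalizes on $\mathcal{X}$ to
\[f^*\mathcal{L}\simeq \mathcal{L}^{\otimes d}\otimes\pi^*\mathcal{N}\otimes\mathcal{O}_\mathcal{X}(E),\]
for some line bundle $\mathcal{N}$ on $S$ and some vertical divisor $E$ supported over $S\setminus S^0$. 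I equip $\mathcal{N}$ and $\mathcal{O}(E)$ with model adelic metrizations and rewrite $h_{\bar{\mathcal{L}}}(f_t(Z_t))=h_{f^*\bar{\mathcal{L}}}(Z_t)$ via the projection formula. Expanding the $(q+1)$-fold arithmetic self-intersection of $f^*\bar{\mathcal{L}}$ by multilinearity: since $\dim S=1$ forces $c_1(\pi^*\bar{\mathcal{N}})^2=0$, the only main surviving pieces are the pure $d^{q+1}\bar{\mathcal{L}}^{q+1}$ term (which produces $d\cdot h_{\bar{\mathcal{L}}}(Z_t)$ after normalization) and the single cross-term with one factor of $\pi^*\bar{\mathcal{N}}$, which by the projection formula for $\pi$ contributes a multiple of $h_{\bar{\mathcal{N}}}(t)$. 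The residual contributions from $\mathcal{O}(E)$ and from the adelic ``defect'' between $f^*\bar{\mathcal{L}}$ and the chosen model metric on $\mathcal{L}^{\otimes d}\otimes\pi^*\mathcal{N}\otimes\mathcal{O}(E)$ are controlled by Chambert-Loir's induction formula: they are integrals of bounded continuous functions and Berkovich Green functions for $E$ against $c_1(\bar{\mathcal{L}})_v^q\wedge[Z_t]$, concentrated at the finitely many places of bad reduction; after dividing by $\deg_{Z_t}(L)$, they yield a uniform-in-$Z_t$ contribution of size $O(h_{S,H}(t)+1)$. The $h_{\bar{\mathcal{N}}}(t)$ term is itself $O(h_{S,H}(t)+1)$ by quasi-equivalence of ample heights on the curve $S$.

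Granted the per-step estimate, applying it along the orbit $Z_t=f_t^n(Y_t)$ and summing the geometric series in $d^{-n}$ gives
\[\bigl|d^{-n}h_{\bar{\mathcal{L}}}(f_t^n(Y_t))-h_{\bar{\mathcal{L}}}(Y_t)\bigr|\leq \tfrac{C_0}{d-1}(h_{S,H}(t)+1);\]
passing to the limit $n\to\infty$, and using that $d^{-n}(f_t^n)^*\bar{\mathcal{L}}|_{X_t}$ converges uniformly adelically to the fiberwise canonical metric $\bar L_{f_t}$ (so $d^{-n}h_{\bar{\mathcal{L}}}(f_t^n(Y_t))\to\widehat{h}_{f_t}(Y_t)$ by continuity of heights in adelic metrics), one obtains the proposition.

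The main obstacle is the uniformity of $C_0$ in $Z_t$, because $\deg_{f_t^n(Y_t)}(L)$ grows as $d^{nq}$ and every correction in the per-step estimate must normalize correctly as $n\to\infty$. This works because each defect contribution is an integral of a bounded function (or of a fixed Green function for $E$) against a current of total mass exactly $\deg_{Z_t}(L)$, which is precisely the denominator appearing in Zhang's formula for $h_{\bar{\mathcal{L}}}(Z_t)$; the degree dependence cancels cleanly, leaving an error controlled only by the family data $(\mathcal{X},f,\mathcal{L},\mathcal{Y})$ and by $h_{S,H}(t)+1$. A secondary, but routine, point is the comparison between the model metric $\bar{\mathcal{N}}$ and $h_{S,H}$, which is the usual quasi-equivalence of ample heights on a smooth projective curve.
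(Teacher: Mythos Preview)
Your argument is correct, but it takes a genuinely different route from the paper's proof.

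The paper does \emph{not} telescope. Instead it fixes a divisor $\mathcal{D}$ representing $\mathcal{L}$ and invokes Call--Silverman's \emph{local} pointwise estimate \cite[Theorem~3.2]{CS-height}: for $x\in X_t\setminus\mathrm{supp}(D_t)$,
\[
\bigl|\lambda_{\mathcal{D},v}(x)-\widehat{\lambda}_{f_t,D_t,v}(x)\bigr|\leq C(v)\bigl(\lambda_{H,v}(t)+1\bigr),
\]
with $C(v)=0$ for all but finitely many $v$. Since $\bar{L}_t$ and $\bar{L}_{t,f_t}$ are two semi-positive metrizations of the same line bundle, the difference $\bar{L}_t-\bar{L}_{t,f_t}$ is the trivial bundle equipped with the continuous function $\lambda_{\mathcal{D},v}-\widehat{\lambda}_{f_t,D_t,v}$; one then writes
\[
\bigl(\bar{L}_t^{\,q+1}\bigr|Y_t\bigr)_v-\bigl(\bar{L}_{t,f_t}^{\,q+1}\bigr|Y_t\bigr)_v
=\sum_{j=0}^q\int_{Y_{t,v}^{\mathrm{an}}}\bigl(\lambda_{\mathcal{D},v}-\widehat{\lambda}_{f_t,D_t,v}\bigr)\,c_1(\bar{L}_t)_v^{\,j}\wedge c_1(\bar{L}_{t,f_t})_v^{\,q-j},
\]
bounds the integrand pointwise by the Call--Silverman inequality, and observes that each positive measure has total mass $\deg_{Y_t}(L_t)$, exactly cancelling the normalization. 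Summing over $v$ gives the result in one stroke.

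Your approach instead rebuilds the per-step inequality from the global line-bundle identity $f^*\mathcal{L}\simeq\mathcal{L}^{\otimes d}\otimes\pi^*\mathcal{N}\otimes\mathcal{O}(E)$ and then telescopes. This is valid---the degree cancellation you emphasize is precisely the same mechanism as in the paper---and it has the virtue of making the geometric origin of the $h_{S,H}(t)$ dependence transparent (it comes from $\mathcal{N}$ and the vertical divisor $E$). But it costs you more bookkeeping: you must resolve the indeterminacies of $f$ to make sense of $f^*\mathcal{L}$, justify the seesaw-type claim that the fiberwise isomorphism globalizes, and track several cross-terms, whereas the paper bypasses all of this by feeding the already-iterated Call--Silverman estimate directly into Chambert-Loir's induction formula. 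In short: the paper promotes a pointwise bound to a bound on subvarieties by integration; you rederive the bound at the subvariety level and then iterate.
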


\begin{proof}
Let $q$ be the relative dimension of $\mathcal{Y}$ and $\mathbb{K}$ be a finite extension of $\mathbb{Q}$ over which $\mathcal{Y}$ and  $t$ are defined. We let $\mathcal{D}$ be a divisor of $\mathcal{X}$ which represents $\mathcal{L}$ and we decompose the height functions $h_{\bar{L}}$ and $\widehat{h}_{f_t}$ using this representative of $\mathcal{L}$:
\[h_{\bar{\mathcal{L}}}=\frac{1}{[\mathbb{K}:\mathbb{Q}]}\sum_{v\in M_\mathbb{K}}n_v\lambda_{\mathcal{D},v} \quad \text{and} \quad \widehat{h}_{f_t}=\frac{1}{[\mathbb{K}:\mathbb{Q}]}\sum_{v\in M_\mathbb{K}}n_v\widehat{\lambda}_{f_t,D_t,v},\]
where $\widehat{\lambda}_{f_t,D_t,v}\circ f_t=d\cdot \widehat{\lambda}_{f_t,D_t,v}$ and $\widehat{\lambda}_{f_t,D_t,v}=\lambda_{\mathcal{D},v}|_{X_t}+O_v(1)$, where $O_v(1)=0$ for all but finitely places $v\in M_\mathbb{K}$. We also let $h_{S,H}=\frac{1}{[\mathbb{K}:\mathbb{Q}]}\sum_{v\in M_\mathbb{K}}n_v\lambda_{H,v}$.

We rely on a key estimate of Call and Silverman~\cite[Theorem~3.2]{CS-height}: there is a constant $C_1\geq1$ depending only on the family $(\mathcal{X},f,\mathcal{L})$, and the heights $h_{\bar{\mathcal{L}}}$ and $h_{S,H}$ such that for any $t\in S^0(\bar{\mathbb{Q}})$, any $x\in X_t(\bar{\mathbb{Q}})\setminus\mathrm{supp}(D_t)$, and any $v\in M_\mathbb{K}$, we have
\begin{align}
\left|\lambda_{\mathcal{D},v}(x)-\widehat{\lambda}_{f_t,D_t,v}(x)\right|\leq C(v)(\lambda_{H,v}(t)+1).\label{ineg:CS-local}
\end{align}
 with $C(v)=C_1\geq1$ for all $v$ in a finite set $S\subset M_\mathbb{K}$ containing all archimedean places, and $C(v)=0$ otherwise.
Moreover, the constant $C_1$ depends only on the choice of $\mathcal{D}$ and on the choice of the above decompositions.

We now fix $t\in S^0_\mathcal{Y}(\bar{\mathbb{Q}})$ and let $q:=\dim Y_t$ (which is independent of $t\in S_\mathcal{Y}^0(\bar{\mathbb{Q}})$). By definition, we have
\begin{align*}
h_{\bar{\mathcal{L}}}(Y_t)-\widehat{h}_{f_t}(Y_t) & = \frac{1}{(q+1)[\mathbb{K}:\mathbb{Q}]\deg_{Y_t}(L_t)}\sum_{v\in M_\mathbb{K}}n_v\left(\left(\bar{L}_t^{q+1}| Y_t\right)_v-\left(\bar{L}_{t,f_t}^{q+1}| Y_t\right)_v\right)
\end{align*}
Fix now a place $v\in M_\mathbb{K}$. Then we can compute
\begin{align*}
\left(\bar{L}_t^{q+1}| Y_t\right)_v-\left(\bar{L}_{t,f_t}^{q+1}| Y_t\right)_v & =\sum_{j=0}^q \left(\left(\bar{L}_t-\bar{L}_{t,f_t}\right)\cdot \bar{L}_t^{j}\cdot \bar{L}_{t,f_t}^{q-j}|Y_t\right)_v\\
& = \sum_{j=0}^q\int_{Y_{t,v}^\mathrm{an}}\log\|1\|_{t,v}^{-1}\cdot c_1(\bar{L}_t)_v^j\wedge  c_1(\bar{L}_{t,f})_v^{q-j}\\
& = \sum_{j=0}^q\int_{Y_{t,v}^\mathrm{an}}\left(\lambda_{\bar{\mathcal{L}_t},v}-\widehat{\lambda}_{f_t,D_t,v}\right)\cdot c_1(\bar{L}_t)_v^j\wedge  c_1(\bar{L}_{t,f})_v^{q-j},
\end{align*}
where we used that the local height function $\lambda_{\bar{\mathcal{L}}_t,v}-\widehat{\lambda}_{f_t,D_t,v}$ extends as a continuous metric on the trivial bundle, since $h_{\bar{L}_t}$ and $\widehat{h}_{f_t}$ are induced by adelic continuous metrization on the same line bundle $L_t$. Combined with \eqref{ineg:CS-local}, this gives
\begin{align*}
\left|\left(\bar{L}_t^{q+1}| Y_t\right)_v-\left(\bar{L}_{t,f_t}^{q+1}| Y_t\right)_v\right| & \leq C(v)\left(\lambda_{S,H,v}(t)+1\right) \sum_{j=0}^q\int_{Y_{t,v}^\mathrm{an}} c_1(\bar{L}_t)_v^j\wedge  c_1(\bar{L}_{t,f})_v^{q-j}\\
& \leq C(v)\left(\lambda_{S,H,v}(t)+1\right) \sum_{j=0}^q\left(L_t^j\cdot L_t^{q-j}\cdot Y_t\right)\\
& \leq C(v)\left(\lambda_{S,H,v}(t)+1\right) (q+1)\deg_{Y_t}(L_t),
\end{align*}
since the measures $c_1(\bar{L}_t)_v^j\wedge  c_1(\bar{L}_{t,f})_v^{q-j}$ don't give mass to the closed subvariety $D_t\cap Y_t$, seen as a pluripolar subset of $X_{t,v}^{\mathrm{an}}$, see e.g. \cite[Lemma~8.6]{Boucksom-Eriksson} for non-archimedean $v\in M_\mathbb{K}$.
As we have $h_{S,H}=\frac{1}{[\mathbb{K}:\mathbb{Q}]}\sum_{v\in M_\mathbb{K}}n_v\lambda_{S,H,v}$, summing over all places and dividing by $(q+1)[\mathbb{K}:\mathbb{Q}]\deg_{Y_t}(L_t)$ gives
\[\left|h_{\bar{\mathcal{L}}}(Y_t)-\widehat{h}_{f_t}(Y_t)\right|\leq C_1\left(h_{S,H}(t)+1\right), \]
for all $t\in S^0_\mathcal{Y}(\bar{\mathbb{Q}})$, which is the wanted estimate, $\mathrm{supp}(D_t)\cap \mathrm{supp}(Y_t)$  is not a component of $\mathrm{supp}(Y_t)$.

\medskip

Let us now replace $\mathcal{D}$ by another divisor representing $\mathcal{L}$ in a finite family of such divisors so that we can make sure that for any family $\mathcal{Y}\to S$ and any $t\in S^0_\mathcal{Y}(\bar{\mathbb{Q}})$, there is a choice $\mathcal{D}^{(i)}$ such that $\mathrm{supp}(D^{(i)}_t)\cap \mathrm{supp}(Y_t)$  is not a component of $\mathrm{supp}(Y_t)$. Replacing $C_1$ by $\max_iC_1(\mathcal{D}^{(i)})$ gives the wanted estimate.
\end{proof}

\section{Variation of canonical heights of subvarieties}\label{sec:Var2}

\subsection{Variation of naive heights of subvarieties}

The material here follows the presentation of Yuan and Zhang~\cite{YZ-adelic} of the Deligne pairing (\cite{MR902592}).
Let $S$ be a smooth and integral projective curve defined over a number field $\mathbb{K}$.
Let $\pi:\mathcal{X}\to S$ be a projective and flat morphism defined over $\mathbb{K}$. Let $D:=\dim(\mathcal{X})-1>0$ be its relative dimension.
Let $\bar{L}$ be a model ample line bundle on $\mathcal{X}$, i.e. there is a $\mathscr{O}_\mathbb{K}$-model $\mathscr{X}$ of $X$, together with an hermitian line bundle $\bar{\mathscr{L}}$ which restricts as $\bar{L}$ on the generic fiber of the structure morphism $\mathscr{X}\to\mathrm{Spec}(\mathscr{O}_\mathbb{K})$. One can define an adelic metrized ample line bundle on $S$ as the \emph{Deligne pairing} $\langle \bar{L}^{D+1}\rangle$. By \cite{YZ-adelic}, we can easily prove the following

\begin{theorem}\label{tm:Deligne-pairing}
Let $S$ be a smooth integral projective curve and $\mathcal{X}$ be an integral projective variety, both defined over a number field. Assume there is a flat and projective morphism $\pi:\mathcal{X}\to S$ of relative dimension $D$, also defined over a number field. Let $\bar{L}$ be a big and nef line bundle on $\mathcal{X}$, equipped with a model metric.

Then $\bar{M}:=\left((D+1)\deg_{X_\eta}(L_\eta)\right)^{-1}\langle\bar{L}^{D+1}\rangle$ is an adelic semi-positive continuous ample line bundle on $S$ whose induced height function is given by
\[h_{\bar{M}}(t)=h_{\bar{L}}(X_t), \quad t\in S(\bar{\mathbb{Q}}).\]
Moreover, for any place $v\in M_\mathbb{K}$, the measure $c_1(\bar{M})_v$ is $\pi_*c_1(\bar{L})_v^D$ and $\deg_{S}(M)=h_{L_\eta}(X_\eta)$, where $X_\eta$ is the generic fiber of $\pi$ and $L_\eta$ is the restrictions of $L$ to $X_\eta$.
\end{theorem}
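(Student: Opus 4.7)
The plan is to deduce the statement from the Yuan--Zhang treatment of the Deligne pairing~\cite{YZ-adelic}. Recall that for a flat projective morphism $\pi:\mathcal{X}\to S$ of relative dimension $D$ and adelic metrized $\mathbb{Q}$-line bundles $\bar{L}_0,\ldots,\bar{L}_D$ on $\mathcal{X}$, the Deligne pairing $\langle \bar{L}_0,\ldots,\bar{L}_D\rangle$ is an adelic metrized $\mathbb{Q}$-line bundle on $S$ enjoying the following properties: (i) the restriction formula
\[\widehat{\deg}\bigl(\langle \bar{L}_0,\ldots,\bar{L}_D\rangle\big|_{\overline{t}}\bigr)=\bigl(\bar{L}_0\cdots\bar{L}_D\,\big|\,\mathcal{X}_{\overline{t}}\bigr)\]
for every closed arithmetic point $\overline{t}$ above $t\in S(\bar{\mathbb{Q}})$; (ii) at each place $v$, the curvature formula $c_1(\langle\bar{L}_0,\ldots,\bar{L}_D\rangle)_v=\pi_{*}\bigl(c_1(\bar{L}_0)_v\wedge\cdots\wedge c_1(\bar{L}_D)_v\bigr)$; and (iii) semi-positivity and continuity of the input metrics are preserved by the pairing. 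I would apply all these properties with $\bar{L}_0=\cdots=\bar{L}_D=\bar{L}$.

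The height formula follows from~(i): for $t\in S(\bar{\mathbb{Q}})$,
\[h_{\langle\bar{L}^{D+1}\rangle}(t)=\frac{(\bar{L}^{D+1}|X_t)}{[\mathbb{K}(t):\mathbb{Q}]},\]
and since $\pi$ is flat we have $\deg_{X_t}(L_t)=\deg_{X_\eta}(L_\eta)$, so that dividing by the normalization $(D+1)\deg_{X_\eta}(L_\eta)$ and comparing with the definition of $h_{\bar{L}}(X_t)$ recalled in Section~\ref{sec:canonicalheight} gives $h_{\bar{M}}(t)=h_{\bar{L}}(X_t)$. The curvature formula for $\bar{M}$ follows from~(ii) after dividing by the same constant. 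For the geometric degree of $\bar{M}$ on the curve $S$, I would use the classical algebraic Deligne-pairing identity $\deg_S\langle L^{D+1}\rangle=(L^{D+1}|\mathcal{X})$, which yields
\[\deg_S(M)=\frac{(L^{D+1}|\mathcal{X})}{(D+1)\deg_{X_\eta}(L_\eta)}=h_{L_\eta}(X_\eta);\]
ampleness on the curve $S$ is then equivalent to positivity of this quantity, which in turn follows from the bigness of $L$ on $\mathcal{X}$.

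The main obstacle is to verify the semi-positivity and continuity of the resulting adelic metric on $\bar{M}$ asserted by property~(iii). Continuity is a place-by-place statement that can be proved from the explicit inductive construction of the Deligne pairing via local defining sections and iterated integration of $\log\|\cdot\|$-terms against the Chern forms. Semi-positivity is more delicate, since a mere model metric for $\bar{L}$ need not be semi-positive; one handles this by approximating $\bar{L}$ by relatively ample metrized model line bundles (whose associated model metrics are semi-positive) and passing to the limit. The Yuan--Zhang formalism is tailored precisely to control such adelic limits, and guarantees that both continuity and semi-positivity persist for the limiting Deligne pairing.
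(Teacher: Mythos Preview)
Your proposal is correct and follows essentially the same route as the paper: both deduce everything from the Yuan--Zhang treatment of the Deligne pairing, invoking the restriction formula for the height identity, the curvature formula for $c_1(\bar{M})_v$, and flatness of $\pi$ for the constancy of $\deg_{X_t}(L_t)$.

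The only notable difference is in packaging. The paper exploits the hypothesis that $\bar{L}$ carries a \emph{model} metric by fixing an $\mathscr{O}_\mathbb{K}$-model $\mathscr{X}\to\mathscr{S}$ with hermitian line bundle $\bar{\mathscr{L}}$ and forming the Deligne pairing $\langle\bar{\mathscr{L}}^{D+1}\rangle$ directly on the arithmetic surface $\mathscr{S}$; Yuan--Zhang then give ampleness of this hermitian line bundle in one stroke, and $\bar{M}$ is just its restriction to the generic fiber. This sidesteps entirely your ``main obstacle'' paragraph: there is no need to approximate by relatively ample models or to pass to adelic limits, because one never leaves the model setting. Your adelic formulation is more general in spirit (it would survive weakening the model-metric hypothesis), but under the stated assumptions the paper's model-level argument is shorter and makes semi-positivity and ampleness automatic rather than something to be argued.
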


\begin{proof}
Fix a $\mathscr{O}_\mathbb{K}$-model $\pi:\mathscr{X}\to\mathscr{S}$ of $\pi:X\to S$ which is flat and projective and which induces the hermitian line bundle $\bar{L}$. Yuan and Zhang \cite[\S 4.4]{YZ-adelic} prove that $\bar{\mathscr{M}}:=\langle \bar{\mathscr{L}}^{D+1}\rangle$ is an ample hermitian line bundle on $\mathscr{S}$ and that one can compute
\begin{align*}
h_{\langle\bar{L}^{D+1}\rangle}(t) = \frac{\left(\langle\bar{\mathscr{L}}^{D+1}\rangle|_{\bar{t}}\right)}{\deg(\bar{t})}=\frac{\left(\bar{\mathscr{L}}^{D+1}|_{\bar{X}_t}\right)}{\deg(\bar{t})},
\end{align*}
where $\bar{t}$ (resp. $\bar{X}_t$) is the closure of $t$ (resp. of $X_t$) in the scheme $\mathscr{X}$.
Note that the last quantity is precisely $(D+1)\deg_{X_{t}}(L_t)h_{\bar{L}}(X_t)$. As $\pi$ is projective and flat, $\deg_{X_{t}}(L_t)=\deg_{X_{\eta}}(L_\eta)$ for all $t$. We deduce the wanted properties of $\bar{M}$ noticing that $\bar{M}$ is the restriction of $\bar{\mathscr{M}}$ to the special fiber of the structure morphism $\mathscr{S}\to\mathrm{Spec}(\mathscr{O}_\mathbb{K})$.

All there is left to do is to compute the measure $c_1(\bar{M})_v$ at an archimedean place $v\in M_\mathbb{K}$. This is done in \cite[\S 4.3.2]{YZ-adelic} where $c_1(\bar{M})_v=\pi_*(c_1(\bar{L})_v^D)$ is proved, which concludes the proof. 
\end{proof}

\subsection{From comparison of heights to variation of heights}

We now come back to the dynamical setting: let $\left(\mathcal{X},f,\mathcal{L},\mathcal{Y}\right)$ be a dynamical pair parametrized by a smooth projective curve $S$, all defined over a number field $\mathbb{K}$, with regular part $S^0$.

~


In what follow, we say that  the dynamical pair $\left(\mathcal{X},f,\mathcal{L},\mathcal{Y}\right)$ is \emph{unstable} if $\widehat{h}_{f_\eta}(Y_\eta)\neq 0$. We now prove the following, which implies the main theorem.

\begin{theorem}\label{tm:main}
Let $\left(\mathcal{X},f,\mathcal{L},\mathcal{Y}\right)$ be a dynamical pair parametrized by $S$ with regular part $S^0$, all defined over a number field $\mathbb{K}$. For any $\mathbb{Q}$-ample height $h_S$ on $S$ of degree $1$ and any $\varepsilon>0$, there exists a constant $C(\varepsilon)>0$ such that, the following holds for all $t\in S_0(\bar{\mathbb{Q}})$,
\[\left(\widehat{h}_{f_\eta}(Y_\eta)-\varepsilon\right)h_S(t)-C(\varepsilon)\leq\widehat{h}_{f_t}(Y_t)\leq \left(\widehat{h}_{f_\eta}(Y_\eta)+\varepsilon\right)h_S(t)+C(\varepsilon).\]
In particular, if the dynamical pair $\left(\mathcal{X},f,\mathcal{L},\mathcal{Y}\right)$ is unstable, the function $t\mapsto \widehat{h}_{f_t}(Y_t)$ is an ample height on $S$.
\end{theorem}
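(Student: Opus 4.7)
The plan is to iterate the functional equation of Lemma~\ref{lm:GV}, feed each iterate into the uniform comparison of Proposition~\ref{prop:CS-higher}, convert the resulting naive heights $h_{\bar{\mathcal{L}}}\bigl((f_t^n)_*Y_t\bigr)$ into heights of ample adelic line bundles on the base $S$ via Theorem~\ref{tm:Deligne-pairing}, and finally extract the claimed asymptotic through the quasi-equivalence of ample heights on the smooth projective curve $S$.

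Concretely, for every $n\geq 0$ Lemma~\ref{lm:GV} gives $\widehat{h}_{f_t}(Y_t)=d^{-n}\widehat{h}_{f_t}((f_t^n)_*Y_t)$, and I feed each $(f^n)_*\mathcal{Y}\subsetneq\mathcal{X}$ into Proposition~\ref{prop:CS-higher}. The essential point is that the constant $C$ produced by that proposition depends only on the family $(\mathcal{X},f,\mathcal{L})$ and on the fixed heights $h_{\bar{\mathcal{L}}}, h_S$ --- not on the subvariety --- so that the resulting uniform estimate
\[\bigl|\widehat{h}_{f_t}(Y_t)-d^{-n}h_{\bar{\mathcal{L}}}\bigl((f_t^n)_*Y_t\bigr)\bigr|\leq C\,d^{-n}\bigl(h_S(t)+1\bigr)\]
holds with the same $C$ for every $n$.

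Next I apply Theorem~\ref{tm:Deligne-pairing} to the family $\pi|_{(f^n)_*\mathcal{Y}}\colon(f^n)_*\mathcal{Y}\to S$ equipped with the restriction of the model-metrized line bundle $\bar{\mathcal{L}}$. This will produce an adelic semi-positive continuous ample $\mathbb{Q}$-line bundle $\bar{M}_n$ on $S$ with $h_{\bar{M}_n}(t)=h_{\bar{\mathcal{L}}}((f_t^n)_*Y_t)$ and $\deg_S(M_n)=h_{L_\eta}((f_\eta^n)_*Y_\eta)$. The projection formula together with $(f_\eta^n)^*L_\eta\simeq L_\eta^{\otimes d^n}$ and $\deg_{(f_\eta^n)_*Y_\eta}(L_\eta)=d^{qn}\deg_{Y_\eta}(L_\eta)$ --- essentially the argument already appearing in the proof of Lemma~\ref{cor:GV} --- will then identify
\[\deg_S(M_n)=d^n\,\widehat{h}_{f_\eta}(Y_\eta)+O(1),\]
with an $O(1)$ that is independent of $n$.

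To conclude, given $\varepsilon>0$ I pick $n=n(\varepsilon)$ so large that both $Cd^{-n}$ and $d^{-n}$ times the $O(1)$ above are $<\varepsilon/3$. With $n$ now fixed, the quasi-equivalence of ample heights on the smooth projective curve $S$ yields
\[\bigl|d^{-n}h_{\bar{M}_n}(t)-d^{-n}\deg_S(M_n)\,h_S(t)\bigr|\leq \tfrac{\varepsilon}{3}\,h_S(t)+C'(\varepsilon),\]
the $O(\sqrt{h_S(t)})$-type remainder coming from N\'eron--Tate on the Jacobian of $S$ being absorbed into $\tfrac{\varepsilon}{3}h_S(t)$ via AM--GM. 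Concatenating the three estimates gives $|\widehat{h}_{f_t}(Y_t)-\widehat{h}_{f_\eta}(Y_\eta)h_S(t)|\leq \varepsilon h_S(t)+C(\varepsilon)$, which is the double inequality of the theorem; the final assertion follows immediately, since for $\widehat{h}_{f_\eta}(Y_\eta)>0$ one then realizes $\widehat{h}_{f_t}(Y_t)$ as an ample height of degree $\widehat{h}_{f_\eta}(Y_\eta)$ up to sublinear error. The main obstacle will be the application of Theorem~\ref{tm:Deligne-pairing} to the push-forward families $(f^n)_*\mathcal{Y}\to S$: one must verify projectivity and flatness (over a suitable Zariski open of $S$), check that $\bar{\mathcal{L}}$ restricts to a big and nef line bundle on $(f^n)_*\mathcal{Y}$, and --- most delicately --- handle the fact that $(f^n)_*\mathcal{Y}$ is \emph{a priori} only a cycle (possibly reducible or with multiplicities), so that both $h_{\bar{M}_n}$ and $\deg_S(M_n)$ must be consistently interpreted on the underlying reduced scheme with multiplicities tracked throughout.
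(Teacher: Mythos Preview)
Your plan matches the paper's proof step for step: iterate the invariance $\widehat{h}_{f_t}((f_t^n)_*Y_t)=d^n\widehat{h}_{f_t}(Y_t)$, feed the iterates into Proposition~\ref{prop:CS-higher} with its $\mathcal{Y}$-independent constant, pass to a height on $S$ via the Deligne pairing (Theorem~\ref{tm:Deligne-pairing}), compute the degree of the resulting line bundle as $d^n\widehat{h}_{f_\eta}(Y_\eta)+O(1)$ using Lemma~\ref{lm:GV}/\ref{cor:GV}, and finish by quasi-equivalence of ample heights on the curve $S$.

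The one point where the paper proceeds differently is exactly the obstacle you flag. Rather than applying Theorem~\ref{tm:Deligne-pairing} to the push-forward $(f^n)_*\mathcal{Y}\to S$ --- which, as you note, is \emph{a priori} only a cycle and may carry multiplicities --- the paper passes to a birational model $\phi_n:\mathcal{X}_n\to\mathcal{X}$ on which $F_n:=f^n\circ\phi_n$ extends to a morphism, sets $\mathcal{Y}_n:=\phi_n^{-1}(\mathcal{Y})$ (which stays irreducible), flattens $\mathcal{Y}_n\to S$ via Raynaud--Gruson, and equips it with the pulled-back line bundle $\bar{\mathcal{L}}_n:=(d^{-n}F_n)^*\bar{\mathcal{L}}$. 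The projection formula gives $d^{-n}h_{\bar{\mathcal{L}}}((f_t^n)_*Y_t)=h_{\bar{L}_n}(\phi_n^{-1}(Y_t))$, so Theorem~\ref{tm:Deligne-pairing} is applied to a genuine integral projective variety with a big and nef model line bundle, and the reducibility/multiplicity bookkeeping never arises. With this substitution your argument is precisely the paper's; everything else you wrote goes through unchanged.
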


\begin{proof}
As $f$ is a finite endomorphism on $\mathcal{X}^0$ and $S^0_{f^n(\mathcal{Y})}=S^0_{\mathcal{Y}}$ for any $n\geq1$, we can apply Proposition~\ref{prop:CS-higher} to the cycle $(f_t^n)_*(Y_t)$ for all $t\in S^0_\mathcal{Y}(\bar{\mathbb{Q}})$. This is possible since $(f_t^n)_*(Y_t)=\deg(f_t^n|_{Y_t})\cdot f_t^n(Y_t)$ and $f_t^n(Y_t)$ is irreducible at least when $Y_t$ is.
\[\left|h_{\bar{\mathcal{L}}}((f_t^n)_*(Y_t))-\widehat{h}_{f_t}((f_t^n)_*(Y_t))\right|\leq C\left(h_{S,H}(t)+1\right).\]
Let now $\phi_n:\mathcal{X}_n\to\mathcal{X}$ be a birational morphism such that there is a morphism $F_n:\mathcal{X}_n\to\mathcal{X}$ with $F_n=f^n\circ\phi_n$ on $\phi_n^{-1}(\mathcal{X}^0)$ and let $\bar{\mathcal{L}}_n:=\left(d^{-n}F_n\right)^*\bar{\mathcal{L}}$. As $F_n$ is a generically finite morphism and $\bar{\mathcal{L}}$ is an ample adelic semi-positive continuous metrized line bundle, the line bundle $\bar{\mathcal{L}}_n$ is an adelic semi-positive continuous metrized big and nef line bundle on $\mathcal{X}_n$.
Set now $\mathcal{Y}_n:=\phi_n^{-1}(\mathcal{Y})$. Up to applying the Raynaud-Gruson flattening theorem~\cite[Theorem~5.2.2]{RG}, we can assume $\mathcal{Y}_n\to S$ is flat and projective. Now, we define a hermitian line bundle $\bar{L}_n$ on $\mathcal{Y}_n$ by restricting $\bar{\mathcal{L}}_n$ to $\mathcal{Y}_n$.
Since for any $t\in S^0(\bar{\mathbb{Q}})$, we have $h_{\bar{\mathcal{L}}}((f_t^n)_*(Y_t))=h_{(f^n)^*\bar{\mathcal{L}}}(Y_t)=h_{\bar{L}_n}(\phi_n^{-1}(Y_t))$, by the invariance property $\widehat{h}_{f_t}((f_t)_*(Y_t))=d\widehat{h}_{f_t}(Y_t)$, this gives
\begin{align}
\left|h_{\bar{L}_n}(\phi_n^{-1}(Y_t))-\widehat{h}_{f_t}(Y_t)\right|\leq \frac{C}{d^n}\left(h_{S,H}(t)+1\right).\label{ineq:at-time-n}
\end{align}
We now rely on Theorem~\ref{tm:Deligne-pairing}: the function $t\mapsto h_{\bar{L}_n}(\phi_n^{-1}(Y_t))$ is a Weil height function associated with an ample adelic semi-positive continuous $\mathbb{Q}$-line bundle $M_n$ on $S$.
 Moreover, the degree of this line bundle is given by
\begin{align*}
\deg(M_n) & =\frac{1}{d^n}h_{L_\eta}((f^n)_*(Y_\eta))\\
&=\frac{1}{d^n(q+1)\mathrm{vol}(((f_\eta^n)^*L_\eta)|_{Y_\eta})}\left(c_1(\mathcal{L})^{q+1}\cdot (F_n)_*\{\mathcal{Y}_n\}\right)\\
& =\frac{1}{(q+1)d^{n(q+1)}\mathrm{vol}(L_\eta|_{Y_\eta})}\left(c_1(\mathcal{L})^{q+1}\cdot (F_n)_*\{\mathcal{Y}\}\right)\\
& = \widehat{h}_{f_\eta}(Y_\eta)+O(d^{-n}),
\end{align*}
where we used Lemma~\ref{lm:GV}.
We now use the quasi-equivalence of ample height functions on a projective curve, see~e.g.~\cite[Chapter~4, Corollary~3.5]{Lang-diophantine}: for any two height functions $h_1,h_2$ induced by two ample line bundles $L_1,L_2$ on $S$ respectively, then
\[\lim_{h_1(t)\to\infty}\frac{h_2(t)}{h_1(t)}=\frac{\deg(L_2)}{\deg(L_1)}.\]
Fix now any ample height $h_S$ on $S$ induced by an ample $\mathbb{Q}$-line bundle of degree $1$. We deduce from the above that $h_{\bar{\mathcal{L}}}((f_t^n)_*(Y_t))=\left(d^n\widehat{h}_{f_\eta}(Y_\eta)+O(1)\right)h_S(t)+\varepsilon_n(h_S(t))$, where $\varepsilon_n(h_S(t))=o(h_S(t))$ depends on $n$. Together with \eqref{ineq:at-time-n}, this gives
\[\left|\widehat{h}_{f_\eta}(Y_\eta)h_S(t)-\widehat{h}_{f_t}(Y_t)\right|\leq \frac{C_1}{d^n}\left(h_S(t)+h_{S,H}(t)+1\right)+\varepsilon_n(h_S(t)),\]
for all $t\in S^0(\bar{\mathbb{Q}})$. Again by quasi-equivalence of ample heights, we have $h_{S,H}\leq C_2 (h_S+1)$ since $H$ is ample and $h_S$ is induced by an ample line bundle, where $C''$ depends only on $\deg(H)$. Fix $n>1$ large enough so that $2C_1(1+C_2)\leq d^n\varepsilon$. We then have
\[\left|\widehat{h}_{f_\eta}(Y_\eta)h_S(t)-\widehat{h}_{f_t}(Y_t)\right|\leq \frac{\varepsilon}{2} h_S(t)+h_{S,H}(t)+C_3+\varepsilon_n(h_S(t)),\]
for all $t\in S^0(\bar{\mathbb{Q}})$, where $C_3>0$ is a constant depending on $\varepsilon>0$. Now, as $\varepsilon_n(h_S(t))=o(h_S(t))$, there exists $B(\varepsilon)\geq1$ such that if $h_S(t)\geq B(\varepsilon)$, then $\varepsilon_n(h_S(t))\leq \varepsilon h_S(t)/2$ and we have $\varepsilon_n(h_S(t))\leq B(\varepsilon)+\frac{\varepsilon}{2}h_S(t)$. The conclusion follows letting $C(\varepsilon):=C_3+(\varepsilon)$.
\end{proof}

An immediate consequence is the Theorem from the introduction:

\begin{proof}[Proof of the Main Theorem]
Fix $\varepsilon>0$, divide the inequalities obtained in Theorem~\ref{tm:main} by $h_S(t)$ and make it tend to $\infty$ to find
\[\left|\lim_{h_S(t)\to\infty \atop t\in S^0(\bar{\mathbb{Q}})}\frac{\widehat{h}_{f_t}(Y_t)}{h_S(t)}-\widehat{h}_{f_\eta}(Y_\eta)\right|\leq \varepsilon.\]
As this holds for any $\varepsilon>0$, the result follows.
\end{proof}

\bibliographystyle{short}
\bibliography{biblio}
\end{document}